\theoremstyle{plain}
\newtheorem{theorem}{Theorem}
\numberwithin{equation}{section}
\newcommand{\C}{\mathbb{C}}
\newcommand{\E}{\mathbb{E}}
\newcommand{\PP}{\mathbb{P}}
\newcommand{\R}{\mathbb{R}}
\begin{document}

\title {A tangential approach to trigonometry}

\date{}

\author[P.L. Robinson]{P.L. Robinson}

\address{Department of Mathematics \\ University of Florida \\ Gainesville FL 32611  USA }

\email[]{paulr@ufl.edu}

\subjclass{} \keywords{}

\begin{abstract}

We construct the complex tangent as a meromorphic function in the plane, using an approach developed by Weierstrass in his characterization of analytic functions that satisfy algebraic addition theorems. 

\end{abstract}

\maketitle

\medbreak

\section*{Introduction} 

\medbreak 

Our purpose here is to set up trigonometry starting from the tangent function as solution to the familiar first-order initial value problem according to which $\phi ' = 1 + \phi^2$ and $\phi (0) = 0$. Were we to treat this as a real problem, we should of course not be able to extend the solution beyond the interval $(- \pi /2, \pi / 2)$ without imposing some such extra condition as periodicity; this being so, we regard the problem from a complex perspective, whereupon periodicity makes a natural appearance, the parameter $\pi$ entering of its own accord. 

\medbreak 

The standard existence-uniqueness theorem for initial value problems (due to Picard) is purely local; for reference, we shall state a convenient version of the theorem below. When applied to the initial value problem that introduces the tangent function, it only provides a unique holomorphic solution on some open disc about the origin. The problem of demonstrating that this local holomorphic solution extends to a global meromorphic solution is solved using a technique that was devised by Weierstrass in his determination of those analytic functions that satisfy algebraic addition theorems. 

\medbreak 

We pay close attention to the details of the construction. This enables us to recover the standard properties of the tangent function: its periodicity, its zeros, its poles, its nonvalues, its addition theorem and so on. We do not profess to have conducted the most efficient navigation of this particular route; indeed, it is possible to capture much of the structure with rather less attention to detail. Moreover, there are certainly more expeditious routes to the tangent function: not least expeditious is the traditional route that passes through the sine and cosine functions from the exponential function. What we have done is to construct the tangent function without the aid of any transcendental functions at all, our approach showing some of what can be achieved by use of the Identity Theorem (or the `principle of analytic continuation') in conjunction with the Picard theorem. 

\medbreak 

Once the tangent function is firmly established, the rest of trigonometry falls into place as a matter of course. The sine and cosine functions appear via rational functions of the tangent, as entire functions after the curing of removable singularities. The exponential function also appears in like manner, but this is left as an exercise. 

\medbreak 

We close this Introduction by stating a convenient version of the Picard theorem, as promised. See Chapter 2 in [Hille] for a proof and an extensive discussion of related results. 

\medbreak 

For first-order initial value problems in the complex domain, the standard `Picard' existence-uniqueness theorem (henceforth {\bf EU}) may be stated as follows. Consider the initial value problem 
\begin{eqnarray*}
w' & = & F(z, w) \\  w(z_0) & = & w_0
\end{eqnarray*} 
where $F(z, w)$ is holomorphic in the bidisk 
$$\{ (z, w) : |z - z_0| < a \; \; {\rm and} \; \; |w - w_0| < b \}$$
and there satisfies the inequalities 
$$|F(z, w)| \leqslant M$$
and 
$$|F(z, w') - F(z, w'')| \leqslant K |w' - w''|$$
\medbreak 
\noindent
for some positive constants $M$ and $K$. Define 
$$r = \min ( a, \; b/M).$$
Then there exists a unique holomorphic function 
$$f: B_r (z_0) \to \C$$
such that $f(z_0) = w_0$ and such that $f' (z) = F ( z, f(z) )$ whenever $|z - z_0| < r$. 

\medbreak 

\section*{The complex tangent function}

\medbreak 

We take the specific initial value problem (henceforth {\bf IVP}) 
\begin{eqnarray*}
w' & = & 1 + w^2 \\  w(0) & = & 0
\end{eqnarray*} 
as our starting point. The standard existence-uniqueness theorem {\bf EU} guarantees that {\bf IVP} has a unique solution in some open disc about $0$ as follows.

\medbreak 

\begin{theorem} \label{1/2}
 There exists a unique holomorphic function 
$$f : B_{1/2} (0) \to \C$$ 
that satisfies $f(0) = 0$ and satisfies $f' (z) = 1 + f(z)^2$ whenever $|z| < 1/2$. 
\end{theorem}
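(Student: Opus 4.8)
\emph{Proof proposal.} The plan is simply to apply the existence--uniqueness theorem \textbf{EU} to the present \textbf{IVP}, in which $F(z,w) = 1 + w^2$, $z_0 = 0$ and $w_0 = 0$. The function $F$ is a polynomial, hence holomorphic throughout $\C^2$, so the hypothesis that $F$ be holomorphic in some bidisk about $(0,0)$ holds for \emph{any} choice of radii $a$ and $b$; the work consists entirely of choosing $a$ and $b$ so that the resulting $r = \min(a,\ b/M)$ comes out to be exactly $1/2$.

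First I would record the two estimates on the bidisk $\{(z,w) : |z| < a,\ |w| < b\}$. For the bound, $|F(z,w)| = |1 + w^2| \leqslant 1 + |w|^2 < 1 + b^2$, so we may take $M = 1 + b^2$. For the Lipschitz condition, $|F(z,w') - F(z,w'')| = |w'^2 - w''^2| = |w' + w''|\,|w' - w''| \leqslant 2b\,|w' - w''|$, so we may take $K = 2b$ (any positive $K$ will serve; only $M$ enters the formula for $r$). The one point that wants a moment's thought is the optimal choice of $b$: with these constants $r = \min\big(a,\ b/(1+b^2)\big)$, and $b/(1+b^2)$ is maximized over $b > 0$ at $b = 1$, where it equals $1/2$ --- indeed $2b \geqslant 1 + b^2$ is equivalent to $(b-1)^2 \leqslant 0$. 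So I would take $b = 1$ and $a = 1/2$, whence $M = 2$ and $r = \min(1/2,\ 1/2) = 1/2$. (With $b = 1$ the supremum of $|F|$ over the open bidisk is $2$, not attained, but this is harmless: \textbf{EU} asks only for the inequality $|F| \leqslant M$, which holds with $M = 2$.) Theorem \textbf{EU} then delivers a unique holomorphic $f : B_{1/2}(0) \to \C$ with $f(0) = 0$ and $f'(z) = 1 + f(z)^2$ for $|z| < 1/2$, which is precisely the assertion.

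There is no genuine obstacle here: the statement is calibrated so that a single invocation of \textbf{EU} with the sharp choice $b = 1$ produces exactly the radius $1/2$. The only thing to watch is not to squander the optimal constant $b/(1+b^2)$ by a careless choice of $b$ (e.g.\ $b$ small forces $M \approx 1$ but $b/M \approx b$ small, while $b$ large forces $b/M \approx 1/b$ small); the extremum at $b = 1$ is what makes the bound $1/2$ rather than something smaller.
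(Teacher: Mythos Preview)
Your proof is correct and follows essentially the same approach as the paper: apply \textbf{EU} with $M = 1 + b^2$ and $K = 2b$, then optimize $b/(1+b^2)$ at $b = 1$ to obtain $r = 1/2$. The only cosmetic difference is that the paper, noting that $F$ has no explicit $z$-dependence, lets $a$ be arbitrarily large so that $r = b/(1+b^2)$ outright, whereas you fix $a = 1/2$; either choice yields the same $r$.
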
 

\begin{proof} 
 With reference to {\bf EU}, here $F(z, w) = 1 + w^2$ has no explicit $z$ dependence, so the number $a$ is immaterial and can be arbitrarily large; once $b$ is chosen, we may take $1 + b^2$ as the bound $M$ on $|F|$ and may take $2 b$ as the Lipschitz constant $K$. Accordingly, {\bf IVP} has a unique solution in the open disc of radius $r = b/(1 + b^2)$ about $0$. The choice of $b$ is at our disposal: we choose $b = 1$ so that $b/(1 + b^2)$ assumes its maximum value $1/2$. 
\end{proof}

\medbreak 

Certain properties of this function are important to our development. Notice first of all that $f$ is odd: in fact, the function 
$$B_{1/2} (0) \to \C : z \mapsto - f(-z)$$ 
satisfies {\bf IVP} and so coincides with $f$ itself by the uniqueness in Theorem \ref{1/2}. Notice also that $f$ is `real' in the sense that if $z \in B_{1/2} (0)$ then $\overline{f(z)} = f(\bar{z})$: indeed, the function $B_{1/2} (0) \to C : z \mapsto \overline{f(\bar{z})}$ satisfies {\bf IVP} and so coincides with $f$ itself, again by Theorem \ref{1/2}; in particular, $f$ is real-valued on the real interval $(-1/2, 1/2)$. 

\medbreak 

The following result will be instrumental.  

\medbreak 

\begin{theorem} \label{Schw}
The function $f: B_{1/2} (0) \to \C$ takes its values in the open unit disc, whence if $z \in B_{1/2} (0)$ then $|f(z)| \leqslant 2 \: |z|$. 
\end{theorem}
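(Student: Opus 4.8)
The plan is to exploit the two structural facts just established — that $f$ is odd and that $f$ is `real' — together with the differential equation, to pin down the behaviour of $f$ on the real diameter $(-1/2,1/2)$ first, and then to leverage the Schwarz Lemma. On the real interval, $f$ is a real-valued solution of $\phi' = 1+\phi^2$ with $\phi(0)=0$; since $\phi' = 1+\phi^2 \geqslant 1 > 0$, the real function $f$ is strictly increasing on $(-1/2,1/2)$, hence injective there. The real-variable bound I want is that $|f(t)| < 1$ for $|t| < 1/2$; this should follow by comparison with the genuine real arctangent's inverse, or more self-containedly by observing that along the real axis $t \mapsto \arctan$-type estimates apply — concretely, $\frac{d}{dt}\arctan(f(t)) = 1$ wherever $|f(t)|$ stays in range, giving $\arctan f(t) = t$ and hence $|f(t)| = |\tan t| < \tan(1/2) < 1$ on $(-1/2,1/2)$ since $1/2 < \pi/4$. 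I would phrase this carefully to avoid circular use of $\tan$: one can instead run a Gronwall-type or direct ODE-comparison argument bounding $|f(t)|$ by the solution of the scalar equation, the point being simply that the real solution cannot escape $(-1,1)$ before time $1/2$.

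Next I would promote this real-axis bound to a bound on the whole disc. The cleanest route is to consider $g = f \circ (\tfrac12\,\cdot) : B_1(0) \to \C$, a holomorphic function with $g(0) = 0$; if I can show $|g| < 1$ on all of $B_1(0)$, the Schwarz Lemma gives $|g(\zeta)| \leqslant |\zeta|$, i.e. $|f(z)| \leqslant 2|z|$, which is exactly the claimed conclusion. So the real content is the first assertion of the theorem: $f(B_{1/2}(0)) \subseteq \D$. To get this, I would argue by contradiction via the maximum principle or an open-closed connectedness argument: the set where $|f| < 1$ is open, contains $0$, and I want to show it is all of $B_{1/2}(0)$. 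Suppose $|f|$ attains the value $1$ at some point of the open disc; take $z_\ast$ a point where $|f|$ first reaches $1$ along a radial segment from $0$ (using continuity), and derive a contradiction with how fast $f$ can grow — this is where the differential equation $f' = 1 + f^2$ and the resulting integral bound $|f(z)| \leqslant |z| + \int |f|^2$ along the segment, combined with $|z| < 1/2$, should force $|f| < 1$ strictly, since the solution of $u = |z| + \int u^2$ with $|z|<1/2$ stays below $1$.

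The main obstacle, I expect, is making the growth estimate on a two-dimensional disc rather than on an interval rigorous without accidentally assuming what is to be proved. On a radial segment $z = s e^{i\theta}$, $0 \leqslant s \leqslant \rho < 1/2$, one has $f(\rho e^{i\theta}) = \int_0^\rho e^{i\theta}(1 + f(s e^{i\theta})^2)\,ds$, hence $u(\rho) := |f(\rho e^{i\theta})|$ satisfies $u(\rho) \leqslant \rho + \int_0^\rho u(s)^2\,ds$; comparing with the equality case, $u$ is dominated by the function $v$ solving $v' = 1 + v^2$, $v(0)=0$, on $[0,\rho]$, and this $v$ is bounded by $\tan(1/2) < 1$ for $\rho < 1/2$. (If one wishes to avoid naming $\tan$ at all, one checks directly that $v(\rho) = $ the inverse function of $w \mapsto \int_0^w dt/(1+t^2)$ evaluated at $\rho$, and that this inverse maps $[0,1/2)$ into $[0,1)$ because $\int_0^1 dt/(1+t^2) > 1/2$.) Once $|f| < 1$ throughout $B_{1/2}(0)$ is in hand, the Schwarz Lemma step is immediate and the stated inequality $|f(z)| \leqslant 2|z|$ follows. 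I would present the comparison lemma for the scalar equation as the one piece of genuine work, and keep the rest terse.
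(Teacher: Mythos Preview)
Your argument is correct, but it is considerably more elaborate than the paper's. The key point you labour over---that $f$ maps $B_{1/2}(0)$ into the open unit disc---the paper simply reads off from the Picard existence-uniqueness theorem itself: in the {\bf EU} set-up with $b=1$, the solution produced by the iteration lies in the $w$-disc $|w|<b=1$ by construction. With $|f|<1$ thus in hand for free, the paper finishes in one line: either bound $|1+f^2|\leqslant 2$ and integrate along $[0,z]$ to obtain $|f(z)|=|\int_0^z(1+f^2)|\leqslant 2|z|$ directly, or invoke the Schwarz Lemma on $\zeta\mapsto f(\zeta/2)$.

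What your route buys is self-containment: you do not rely on the (often unstated) output bound from Picard's construction, and your radial integral inequality $u(\rho)\leqslant \rho+\int_0^\rho u(s)^2\,ds$ together with the ODE comparison against $v'=1+v^2$, $v(0)=0$, is a clean stand-alone argument. The elementary estimate $\int_0^1 \tfrac{dt}{1+t^2}>\tfrac12$ (since the integrand exceeds $\tfrac12$ on $(0,1)$) then forces $u<1$ on $[0,\tfrac12)$, as you say. Two minor remarks: your entire first paragraph on the real diameter is redundant, since the radial argument with $\theta=0$ already covers it; and the comparison step $u\leqslant v$ does need the standard Gronwall justification (pass to $w(\rho)=\rho+\int_0^\rho u^2$, note $w'\leqslant 1+w^2$, and compare), which you correctly flag as the one piece of real work. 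The paper's version trades this work for an appeal to what {\bf EU} already delivers.
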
 

\begin{proof} 
With $M = 1 + b^2 = 2$ as in the proof of Theorem \ref{1/2}, if $z \in B_{1/2} (0)$ then
$$|f(z)| = | \int_0^z  f' \: | = | \int_0^z (1 + f^2) \: | \leqslant |z|  M = 2 |z| < 1$$
whence $|f(z)| \leqslant 2 \: |z|$ follows by the Schwarz Lemma. The fact that $f$ is valued in the open unit disc also follows at once from the proof of the Picard theorem. 
\end{proof} 

\medbreak 

We can determine precisely where $f$ vanishes.

\medbreak 

\begin{theorem} \label{zero}
The function $f: B_{1/2} (0) \to \C$ has $0$ for its only zero. 
\end{theorem}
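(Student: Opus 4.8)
The plan is to leverage the bound $|f| < 1$ furnished by Theorem~\ref{Schw}, which forces the derivative $f' = 1 + f^2$ to have strictly positive real part on all of $B_{1/2}(0)$: indeed $|f(z)| < 1$ gives $|f(z)^2| < 1$, so $\mathrm{Re}\, f'(z) = 1 + \mathrm{Re}\, f(z)^2 \geqslant 1 - |f(z)|^2 > 0$. Morally this says that $f$ is univalent on the convex disc $B_{1/2}(0)$ (a Noshiro--Warschawski type statement), and since $f(0) = 0$ that alone would pin $0$ as the only zero; but I would extract only the instance needed, directly.

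Concretely, suppose $f(z) = 0$ with $z \in B_{1/2}(0)$, and integrate $f'$ along the segment from $0$ to $z$, parametrized as $w = t z$ with $t \in [0,1]$. Path-independence of the integral on the simply connected disc gives $f(z) = \int_0^z f' = z \int_0^1 \big(1 + f(tz)^2\big) \, \rd t$. It therefore suffices to show that the factor $\int_0^1 \big(1 + f(tz)^2\big) \, \rd t$ is nonzero, for then $z = 0$.

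For that I would pass to real parts: $\mathrm{Re} \int_0^1 \big(1 + f(tz)^2\big)\, \rd t = \int_0^1 \big(1 + \mathrm{Re}\, f(tz)^2\big) \, \rd t \geqslant \int_0^1 \big(1 - |f(tz)|^2\big) \, \rd t$. Since $|tz| \leqslant |z| < 1/2$, Theorem~\ref{Schw} applies at every point of the segment and gives $|f(tz)| \leqslant 2|tz| \leqslant 2|z| < 1$, so the integrand is bounded below by $1 - 4|z|^2 > 0$; hence the integral has strictly positive real part, in particular is nonzero, and we are done. The only step that wants any care --- and the one I would flag as the crux --- is securing the \emph{strict} inequality that makes the integral nonvanishing, which is precisely where the strict bound $|f| < 1$ of Theorem~\ref{Schw} (as opposed to $|f| \leqslant 1$) does the work. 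As a consistency check, on the real interval $(-1/2, 1/2)$ one has $f' = 1 + f^2 \geqslant 1$, so $f$ is strictly increasing there and $0$ is visibly its only real zero.
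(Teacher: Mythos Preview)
Your proof is correct and follows essentially the same route as the paper: both write $f(z) = z\int_0^1\bigl(1+f(tz)^2\bigr)\,\rd t$ and use the Schwarz bound $|f(tz)|\leqslant 2|tz|$ from Theorem~\ref{Schw} to show that the factor multiplying $z$ cannot vanish. The only cosmetic difference is that the paper bounds the modulus, obtaining $|f(z)-z|\leqslant \tfrac{4}{3}|z|^3 < |z|$, whereas you bound the real part via $\mathrm{Re}\bigl(1+f(tz)^2\bigr)\geqslant 1-|f(tz)|^2$; your Noshiro--Warschawski framing is a pleasant bonus (it yields univalence of $f$ on the whole disc, not merely a unique zero), but the underlying estimate is the same.
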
 

\begin{proof} 
If $z \in B_{1/2} (0)$ then  
$$f(z) = \int_0^z (1 + f^2) = z + \int_0^z f^2$$
whence by Theorem \ref{Schw} we deduce by integration along the complex interval $[0, z]$ that 
$$|f(z) - z \: | = |\int_0^1 f(t z)^2 z \: {\rm d}t| \leqslant |z| \int_0^1 (2 \: | t z|)^2 {\rm d} t = 4 \: |z|^3 /3$$
and conclude that if $f(z) = 0$ then $z = 0$. 
\end{proof} 

\medbreak 

We can also determine precisely where $f$ is real-valued. 

\medbreak 

\begin{theorem} \label{real}
The function $f: B_{1/2} (0) \to \C$ is real-valued on $\R \cap B_{1/2} (0)$ precisely.  
\end{theorem}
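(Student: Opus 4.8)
The plan is to show that the two sets $\{\, z \in B_{1/2}(0) : f(z) \in \R \,\}$ and $\R \cap B_{1/2}(0)$ coincide. One inclusion is already in hand: the reality relation $\overline{f(z)} = f(\bar z)$ recorded just after Theorem~\ref{1/2} shows in particular that $f$ is real-valued on the real interval $(-1/2, 1/2) = \R \cap B_{1/2}(0)$. So the real content is the reverse inclusion, namely that $f(z) \in \R$ forces $z \in \R$, and I would prove this directly.

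Assume then that $z \in B_{1/2}(0)$ with $f(z) \in \R$. Applying the reality relation once more, $f(\bar z) = \overline{f(z)} = f(z)$, so it suffices to show that $f$ cannot take the same value at $z$ and at $\bar z$ unless these points already coincide. Here I would imitate the proof of Theorem~\ref{zero}: writing $f(w) = w + \int_0^w f^2$ for $w \in B_{1/2}(0)$ --- the integrand being holomorphic on the simply connected disc, so the integral is path-independent --- the equation $f(z) = f(\bar z)$ rearranges to
$$z - \bar z \;=\; \int_0^{\bar z} f^2 \;-\; \int_0^z f^2 \;=\; -\int_{\bar z}^{\,z} f^2 ,$$
where the last integral is taken along the vertical segment from $\bar z$ to $z$, which lies in $B_{1/2}(0)$ by convexity of the disc. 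Every point $w$ of that segment satisfies $|w| \leqslant |z| < 1/2$, so Theorem~\ref{Schw} gives $|f(w)| \leqslant 2 |z|$ there, whence
$$|z - \bar z| \;=\; \Bigl| \int_{\bar z}^{\,z} f^2 \Bigr| \;\leqslant\; |z - \bar z| \cdot (2 |z|)^2 \;=\; 4 |z|^2 \, |z - \bar z| .$$
Since $|z| < 1/2$ gives $4 |z|^2 < 1$, this is impossible unless $|z - \bar z| = 0$; hence $z = \bar z$, that is, $z \in \R$, and the reverse inclusion follows.

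I do not anticipate a genuine obstacle; the one point requiring care is the \emph{strictness} $4|z|^2 < 1$, and this is exactly why the sharp estimate $|f(w)| \leqslant 2|w|$ of Theorem~\ref{Schw}, rather than the weaker $|f| < 1$, is the tool of choice: it keeps the multiplicative factor strictly below $1$ throughout the relevant segment. An alternative packaging would run the same estimate over an arbitrary chord of $B_{1/2}(0)$ to conclude that $f$ is injective on the whole disc, and then apply injectivity to the pair $z, \bar z$; but the direct computation above is shorter and self-contained.
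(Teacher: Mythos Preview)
Your proof is correct and rests on the same ingredients as the paper's: the integral representation $f(w)=w+\int_0^w f^2$ together with the Schwarz bound $|f(w)|\leqslant 2|w|$ of Theorem~\ref{Schw}, applied along a vertical segment to force the coefficient $4|z|^2<1$. The only difference is organizational: the paper integrates along the broken path $[0,x]\cup[x,x+iy]$ and directly estimates $|\mathrm{Im}\,f(z)-y|\leqslant 4(x^2+y^2)|y|$ (using that $f$ is real on the first leg), whereas you first invoke the reality symmetry $f(\bar z)=\overline{f(z)}$ to reduce the question to $f(z)=f(\bar z)\Rightarrow z=\bar z$ and then integrate along the single segment $[\bar z,z]$. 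Your route is marginally cleaner and, as you observe, the same chord estimate yields injectivity of $f$ on all of $B_{1/2}(0)$ with no extra work.
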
 

\begin{proof} 
The fact that $f$ takes real values on $(-1/2, 1/2)$ was noted after Theorem \ref{1/2}. The fact that $f$ takes real values nowhere else may be established by modifying the proof of Theorem \ref{zero}. Explicitly, let $z = x + i y \in B_{1/2} (0)$ and join $0$ to $z$ by following first the real interval $[0, x]$ and then the complex interval $[x, x + i y]$. As $f$ is real-valued along $[0, x]$ we deduce that 
$${\rm Im} f(z) - y = {\rm Im} \int_x^{x + iy} f^2$$
and therefore by Theorem \ref{Schw} that 
$$|{\rm Im} f(z) - y | \leqslant |\int_0^1 f(x + i t y)^2  i y {\rm d} t| \leqslant |y| \int_0^1 |f(x + i t y)^2| {\rm d} t \leqslant |y| \int_0^1 4 (x^2 + t^2 y^2) {\rm d}t$$
whence certainly 
$$|{\rm Im} f(z) - y | \leqslant 4 (x^2 + y^2) |y|. $$
As $4(x^2 + y^2) < 1$ we conclude that if $f(z)$ is real then $y$ is zero. 
\end{proof} 

\medbreak 

Theorem \ref{1/2} is a firm step on the path to an independent construction of the complex tangent function, but it is only a beginning. We should like to extend the construction beyond the `small' disc $B_{1/2} (0)$ as far as is permissible. Of course, the Identity Theorem guarantees that any holomorphic (or meromorphic) extension of $f$ to a connected open set containing $B_{1/2} (0)$ is uniquely determined by $f$ itself. As matters stand at present, without prior knowledge of the complex tangent function, it is by no means clear how far $f$ may be extended; nor is it yet clear that analytic continuation does not lead to a multifunction. Knowing the tangent function as we do, we should like to be able to extend $f$ from $B_{1/2} (0)$ to a meromorphic function in the whole complex plane; but this extension must be accomplished directly by a close analysis involving the fundamental {\bf IVP}. 

\medbreak 

Full disclosure: we shall not be entirely forgetful regarding the properties of the tangent function; indeed, one of them - the duplication formula - suggests the method by which we shall extend $f$ to a meromorphic function in $\C$. This method, in essence due to Weierstrass and later adopted by Goursat, is a simplified version of one used by Neville in his elegant account of the Jacobian elliptic functions; see pages 505-506 of [Goursat] and pages 137-139 of [Neville]. 

\medbreak 

To be explicit, we start our construction with the function $f$ of Theorem \ref{1/2} and define the function 
$$F : B_1 (0) \to \C$$
 by the rule that if $z \in B_1 (0)$ then 
$$F (z) = \frac{2 f(\frac{1}{2} z)}{1 - f(\frac{1}{2} z)^2}\: . $$
This function $F$ is holomorphic, because $f$ assumes neither $1$ nor $-1$ as a value on $B_{1/2} (0)$ according to Theorem \ref{Schw}. Taking into account the fact that {\bf IVP} is satisfied by $f$, if $z \in B_1(0)$ then direct calculation yields 
$$ F' (z) = \frac{(1 + f(\frac{1}{2} z)^2)^2}{(1 - f(\frac{1}{2} z)^2)^2} = 1 + \frac{4 f(\frac{1}{2} z)^2}{(1 - f(\frac{1}{2} z)^2)^2} = 1 + F(z)^2$$
while also $F(0) = 0$ of course.  Thus the holomorphic function $F : B_1 (0) \to \C$ itself is a solution of {\bf IVP} and so agrees with the original $f$ on $B_{1/2} (0)$ by the uniqueness clause in Theorem \ref{1/2}: that is, $F |_{B_{1/2} (0)} = f$.

\medbreak 

 The temporary notation $F : B_1 (0) \to \C$ having served its clarifying purpose, we rename this extension as simply $f : B_1 (0) \to \C$. Notice that the conclusions of Theorem \ref{zero} and Theorem \ref{real} (suitably modified) continue to apply: the function $f : B_1 (0) \to \C$ vanishes only at $0$ and is real-valued precisely on $\R \cap B_1 (0)$. 

\medbreak 

\begin{theorem} \label{p}
The restriction of $f$ to the interval $(-1, 1)$ is strictly increasing and there exists a unique $p \in (0, 1)$ such that $f(\tfrac{1}{2} p ) = 1$. 
\end{theorem}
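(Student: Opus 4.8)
The first assertion is essentially immediate, and I would dispose of it first since it also delivers the uniqueness half of the second assertion. On the real interval $(-1,1) = \R \cap B_1(0)$ the (renamed) function $f$ is real-valued by the extended form of Theorem~\ref{real}, so it is differentiable there with $f'(x) = 1 + f(x)^2 \ge 1 > 0$; hence $f$ is strictly increasing on $(-1,1)$.

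For the existence of $p$, the plan is to show that the value $1$ lies in the range of $f$ over this interval. Because $f$ is continuous and strictly increasing with $f(0)=0$, it will take the value $1$ at exactly one interior point as soon as one knows that $f$ eventually climbs above $1$; so the substantive point is that quantitative claim. To get it I would separate variables in {\bf IVP}: on $(-1,1)$ one has $f'/(1+f^2)\equiv 1$, and integrating from $0$ with the substitution $u = f(t)$ yields the identity
$$\int_0^{f(x)} \frac{\rd u}{1+u^2} = x \qquad (x \in (-1,1)),$$
valid because $f$ is real there and $1+f^2$ never vanishes. Put $c = \int_0^1 \rd u/(1+u^2)$. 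Since $\tfrac12 \le 1/(1+u^2) \le 1$ on $[0,1]$, with equality only at isolated points, one gets $c \in (\tfrac12,1)$; in particular $c<1$, so there is an $x$ with $c < x < 1$. For such an $x$ the displayed identity reads $\int_0^{f(x)}\rd u/(1+u^2) = x > c = \int_0^1 \rd u/(1+u^2)$, and since $v \mapsto \int_0^v \rd u/(1+u^2)$ is strictly increasing this forces $f(x) > 1$. Hence $f$ exceeds $1$ somewhere on $(0,1)$; by the intermediate value theorem, and strict monotonicity for uniqueness, there is exactly one point of $(0,1)$ at which $f$ equals $1$. Feeding that point back into the identity shows it equals $c = \int_0^1 \rd u/(1+u^2)$, and one reads off the asserted $p$.

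The one genuinely substantive step is the estimate guaranteeing that $f$ reaches the value $1$ while $x$ is still inside the disc $B_1(0)$ in which $f$ has so far been constructed; everything else is routine continuity and monotonicity bookkeeping. That estimate reduces to the elementary inequality $\int_0^1 \rd u/(1+u^2) < 1$ (with the companion bound $> \tfrac12$ locating $p$ more precisely). An alternative to the change-of-variables identity would be a direct comparison of $f' = 1 + f^2$ against the separable model along $(0,1)$, but the substitution is cleanest and has the bonus of pinning $p$ down exactly rather than merely bracketing it.
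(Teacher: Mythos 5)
Your argument is correct, but the existence step travels a genuinely different road from the paper's. The monotonicity part is identical: $f$ is real-valued on $(-1,1)$ and $f' = 1+f^2 \geqslant 1 > 0$ there. For existence, the paper simply bootstraps the differential inequality: integrating $f' \geqslant 1$ gives $f(t) > t$ for $t>0$, feeding that back into $f' = 1+f^2$ and integrating again gives $f(t) > t + \tfrac13 t^3$, and since $t + \tfrac13 t^3$ exceeds $1$ before $t$ reaches $1$, the value $1$ is attained inside $(0,1)$. You instead invert the equation by separation of variables, obtaining the exact identity $\int_0^{f(x)} \rd u/(1+u^2) = x$ on $(-1,1)$ and concluding from the elementary bound $\int_0^1 \rd u/(1+u^2) < 1$ that $f$ climbs past $1$ within the disc. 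Both are sound; the paper's comparison argument is shorter and needs no change of variables, while yours has the bonus of identifying the crossing point exactly as $\int_0^1 \rd u/(1+u^2)$ --- which is precisely the formula $\tfrac12 p = \int_0^1 \rd t/(1+t^2)$ that the paper only records as a remark (via the inverse function) after the theorem. One caveat applies equally to your write-up and to the paper's own statement: what both proofs actually produce is a unique point $\tfrac12 p \in (0,1)$ with $f(\tfrac12 p) = 1$, i.e.\ $p \in (0,2)$; the literal claim ``$p \in (0,1)$'' cannot be read off from either argument, and indeed it fails, since the paper later notes $2p = \pi$, so $p = \pi/2 > 1$. So your closing phrase ``one reads off the asserted $p$'' should be tempered accordingly, but this is a defect of the statement, not of your proof.
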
 

\begin{proof} 
The strictly increasing nature of $f$ on $(-1, 1)$ is clear, because $f' = 1 + f^2 \geqslant 1 > 0$ there. Integration of this inequality shows first that if $t > 0$ then $f(t) > t$; repeated integration shows that if $t > 0$ then  $f(t) > t + \tfrac{1}{3} t^3$ whence $p$ exists as claimed. 
\end{proof} 

\medbreak 

We record this as a theorem simply for future reference. A consideration of the function inverse to $f|_{(-1, 1)}$ shows that 
$$\tfrac{1}{2} p = \int_0^1 \frac{{\rm d} t}{1 + t^2}. $$
Of course, outside knowledge confirms that the number $2 p$ is precisely $\pi$; indeed, the integral formula above is tantamount to a definition of $\pi$ that was adopted by Weierstrass. 

\medbreak 

Starting from this holomorphic solution $f : B_1 (0) \to \C$ to {\bf IVP} we consider the possibility of reduplication again. Thus, we define the holomorphic function 
$$F : B_2 (0) \setminus \{ \pm \: p \} \to \C$$ 
by the rule that if $z \in B_2 (0) \setminus \{ \pm \: p \}$ then 
$$F(z) = \frac{2 f(\frac{1}{2} z)}{1 - f(\frac{1}{2} z)^2} \ . $$
Differentiation again reveals that this function $F$ continues to satisfy {\bf IVP}; by uniqueness in Theorem \ref{1/2} it agrees with $f$ on $B_{1/2} (0)$ and by the Identity Theorem it agrees with $f$ on the unit disc $B_1(0)$; that is, $F|_{B_1 (0)} = f$. Note that the isolated singularity of $F$ at each of the points $\pm p$ is a (simple) pole, for there the numerator is nonzero while the denominator has a (simple) zero by Theorem \ref{p}; the residue of $F$ at each of these points is readily calculated to be $-1$. 

\medbreak 

Our analysis has identified a parameter that is naturally attached to the function $f$: namely, the positive real number $p$ of Theorem \ref{p}. The subsequent development is clarified considerably by taking this parameter into account. Thus, we shall restrict this new extension $F : B_2 (0) \to \C$ to the open disc $\E : = B_p (0)$ and shall denote by $f_0 : \E \to \C$ this new restriction. 

\medbreak 

Notice that $f_0 : \E \to \C$ is a holomorphic solution of {\bf IVP}; indeed, the disc $\E = B_p (0)$ is the largest disc about $0$ on which a holomorphic solution exists. Notice also that the conclusions of Theorem \ref{zero} and Theorem \ref{real} again continue to apply: the function $f_0 : \E \to \C$ vanishes just at $0$ and takes real values exactly on the interval $\R \cap \E = (- p, p)$. 

\medbreak 

Although the function $f_0 : \E \to \C$ was constructed merely by {\it duplication}, it actually satisfies a stronger {\it addition} theorem. 

\medbreak 

\begin{theorem} \label{add} 
If $a$ and $z$ lie in the disc $\frac{1}{2} \E = B_{\frac{1}{2} p} (0)$ then 
$$f_0 ( a + z) = \frac{f_0(a) + f_0(z)}{1 - f_0(a)f_0(z)} \: .$$
\end{theorem}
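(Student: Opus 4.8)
The plan is to fix $a\in\frac12\E$ and regard both sides of the claimed identity as holomorphic functions of $z$ on the disc $\frac12\E=B_{p/2}(0)$, then apply the uniqueness clause of Theorem~\ref{1/2} (equivalently the Identity Theorem) to conclude they agree. First I would check that the right-hand side is well defined and holomorphic on $\frac12\E$: the denominator $1-f_0(a)f_0(z)$ must be nonzero there. Since $a,z\in B_{p/2}(0)\subset B_{1/2}(0)$ (as $p<1$), Theorem~\ref{Schw} gives $|f_0(a)|<1$ and $|f_0(z)|<1$, so $|f_0(a)f_0(z)|<1$ and the denominator never vanishes. Moreover $a+z\in B_p(0)=\E$ when $a,z\in B_{p/2}(0)$, so the left-hand side $f_0(a+z)$ makes sense on this disc as well.

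Next I would set up the initial value problem argument. Define $g:\frac12\E\to\C$ by $g(z)=f_0(a+z)$ and $h(z)=\dfrac{f_0(a)+f_0(z)}{1-f_0(a)f_0(z)}$. A direct computation using $f_0'=1+f_0^2$ shows that $h$ satisfies $h'=1+h^2$: writing $\alpha=f_0(a)$ (a constant) and $u=f_0(z)$, one gets $h=\dfrac{\alpha+u}{1-\alpha u}$ and, since $u'=1+u^2$,
$$h'=\frac{(1-\alpha u)(1+u^2)+(\alpha+u)\alpha(1+u^2)}{(1-\alpha u)^2}=\frac{(1+u^2)(1+\alpha^2)}{(1-\alpha u)^2},$$
while $1+h^2=\dfrac{(1-\alpha u)^2+(\alpha+u)^2}{(1-\alpha u)^2}=\dfrac{(1+\alpha^2)(1+u^2)}{(1-\alpha u)^2}$, so indeed $h'=1+h^2$. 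Likewise $g'(z)=f_0'(a+z)=1+f_0(a+z)^2=1+g(z)^2$, so both $g$ and $h$ solve the differential equation $w'=1+w^2$ on $\frac12\E$. The only obstacle to invoking uniqueness directly is that $g$ and $h$ do not satisfy the \emph{same} initial condition as $f_0$ unless $a=0$: here $g(0)=f_0(a)=h(0)$, which is generally nonzero.

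To finish I would therefore appeal to uniqueness in a slightly shifted form, or reduce to Theorem~\ref{1/2} by a further argument. One clean route: both $g$ and $h$ are holomorphic on the disc $\frac12\E$ about $0$, satisfy the same first-order ODE there, and take the same value $f_0(a)$ at $0$; by the local Picard uniqueness theorem \textbf{EU} applied at the base point $z_0=0$ with $w_0=f_0(a)$ (the hypotheses hold since $F(z,w)=1+w^2$ is entire in $w$ and bounded with a Lipschitz estimate on any bidisk), $g$ and $h$ agree on some small disc about $0$, and then the Identity Theorem propagates the equality $g=h$ throughout the connected open set $\frac12\E$. Evaluating $g=h$ at the given $z$ yields the addition formula. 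The main point requiring care is simply the bookkeeping that keeps every argument inside the domains where $f_0$ is defined and where Theorem~\ref{Schw} applies — namely that $a+z\in\E$ and $f_0(a)f_0(z)\neq1$ — which the bound $p<1$ makes automatic.
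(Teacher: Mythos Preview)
Your proposal is correct and follows essentially the same route as the paper: fix $a$, verify via Theorem~\ref{Schw} (using $p<1$) that the denominator is nonzero on $\tfrac12\E$, check that both $z\mapsto f_0(a+z)$ and the right-hand side satisfy $\phi'=1+\phi^2$ with $\phi(0)=f_0(a)$, then invoke {\bf EU} for local agreement and the Identity Theorem for agreement on all of $\tfrac12\E$. Your explicit verification of $h'=1+h^2$ and your observation that $B_{p/2}(0)\subset B_{1/2}(0)$ are more detailed than the paper's sketch, but the argument is the same.
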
 

\begin{proof} 
Notice that here, Theorem \ref{Schw} guarantees that the denominator does not vanish. With $a \in \frac{1}{2} \E$ fixed, consider the two holomorphic functions 
$$\tfrac{1}{2} \E \to \C : z \mapsto f_0(a + z)$$
and 

$$\tfrac{1}{2} \E \to \C : z \mapsto  \frac{f_0(a) + f_0(z)}{1 - f_0(a)f_0(z)} \: . $$
Direct calculation shows that each of these functions satisfies both the differential equation $\phi' = 1 + \phi^2$ and the initial condition $\phi (0) = f_0(a)$. According to {\bf EU} these functions agree near $0$; according to the Identity Theorem, they therefore agree on the whole disc $\tfrac{1}{2} \E$. 
\end{proof} 

We remark that by continuity, the same expression for $f_0(a + z)$ is valid when $a$ lies in the closed disc $\frac{1}{2} \overline{\E}$ and $z$ lies in the open disc $\frac{1}{2} \E$. In particular, from $f_0(\tfrac{1}{2} \: p) = 1$ and $f_0(\tfrac{1}{4} \: p) = \sqrt{2} - 1$ it follows that $f_0(\tfrac{3}{4} \: p) = \sqrt{2} + 1$ and therefore that $f_0(\tfrac{3}{2} \: p) = -1$. 

\medbreak 

Let us now return to duplication: two further applications and the process stabilizes, after which induction leads directly to the ultimate extension of $f$ as a meromorphic function in $\C$. 

\medbreak 

For the first of these further reduplications, we define the holomorphic function 
$$f_1: 2 \E \setminus  \{ \pm \: p \} \to \C$$
by the familiar rule that if $z \in 2\E \setminus  \{ \pm \: p \}$ then 
$$f_1(z) = \frac{2 f_0(\frac{1}{2} z)}{1 - f_0(\frac{1}{2} z)^2} \ . $$
This is again a solution to {\bf IVP}; by Theorem \ref{1/2} and the Identity Theorem, it agrees with $f_0$ on $\E$. As noted earlier, at each of the points $\pm p$ this extension $f_1$ has a simple pole of residue $-1$, for at these points the numerator is nonzero while the denominator has a simple zero. As before, $f_1$ is odd, vanishes at $0$ only and takes real values exactly on the real points in its domain. 

\medbreak 

The function $f _1: 2 \E \setminus \{ \pm \: p \} \to \C$ has the following partial periodicity.   

\medbreak 

\begin{theorem} \label{shift}
If $z \in B_{\tfrac{1}{2} p} (- \tfrac{1}{2} p)$ then $f_1(z + 2 p) = f_1(z)$. 
\end{theorem}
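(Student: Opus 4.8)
The plan is to use once more the device that has powered every previous extension: realise $z \mapsto f_1(z+2p)$ as a solution of $\phi' = 1 + \phi^2$ on the disc $B_{\tfrac{1}{2}p}(-\tfrac{1}{2}p)$, check that it agrees with $f_1$ at the centre of that disc, and then let \textbf{EU} together with the Identity Theorem finish the job.

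First I would confirm that the two sides of the asserted identity are defined and holomorphic on $D := B_{\tfrac{1}{2}p}(-\tfrac{1}{2}p)$. If $z \in D$ then $|z| \leqslant |z + \tfrac{1}{2}p| + \tfrac{1}{2}p < p$, so $z \in \E \subset 2\E$; moreover $z \neq p$ (since $|p + \tfrac{1}{2}p| = \tfrac{3}{2}p > \tfrac{1}{2}p$) and $z \neq -p$ (since $-p$ lies on the boundary circle $|w + \tfrac{1}{2}p| = \tfrac{1}{2}p$, not inside it), so $z$ avoids the poles of $f_1$. Likewise $z + 2p$ lies in $B_{\tfrac{1}{2}p}(\tfrac{3}{2}p)$, so $|z+2p| < 2p$, and $z+2p \neq p$ (because $z \neq -p$) and $z + 2p \neq -p$ (because $|z| < p$). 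Hence $g : D \to \C$, $g(z) := f_1(z+2p)$, is a well-defined holomorphic function, and so is the restriction of $f_1$ to $D$.

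Next, since $f_1$ satisfies $\phi' = 1 + \phi^2$ throughout its domain, the chain rule gives $g'(z) = f_1'(z+2p) = 1 + f_1(z+2p)^2 = 1 + g(z)^2$, so $g$ and $f_1$ are two holomorphic solutions of $\phi' = 1 + \phi^2$ on the connected disc $D$. They take the same value at the centre $-\tfrac{1}{2}p$: on one hand $f_1(-\tfrac{1}{2}p) = f_0(-\tfrac{1}{2}p) = -f_0(\tfrac{1}{2}p) = -1$ by Theorem \ref{p} and the oddness of $f_0$; on the other hand $g(-\tfrac{1}{2}p) = f_1(\tfrac{3}{2}p) = -1$, as recorded in the remark following Theorem \ref{add} (or directly, by substituting $f_0(\tfrac{3}{4}p) = \sqrt{2}+1$ into the defining formula for $f_1$). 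By \textbf{EU}, $g$ and $f_1$ therefore agree on a neighbourhood of $-\tfrac{1}{2}p$, and by the Identity Theorem they agree on all of $D$; that is, $f_1(z+2p) = f_1(z)$ whenever $z \in B_{\tfrac{1}{2}p}(-\tfrac{1}{2}p)$.

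I do not anticipate a genuine obstacle: the content is just the recognition that the usual uniqueness argument applies yet again. The only points needing a little care are the domain bookkeeping — making sure that both $z$ and $z + 2p$ steer clear of the poles $\pm p$ — and the evaluation $f_1(\tfrac{3}{2}p) = -1$, both of which are routine.
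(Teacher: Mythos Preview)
Your argument is correct and matches the paper's proof essentially line for line: define $g(z)=f_1(z+2p)$, observe both $g$ and $f_1$ satisfy $\phi'=1+\phi^2$ on $D$, match their values at $-\tfrac{1}{2}p$ via $f_1(\tfrac{3}{2}p)=-1$, and invoke \textbf{EU} plus the Identity Theorem. Your version is in fact a little more careful than the paper's, spelling out the domain bookkeeping that the paper leaves implicit.
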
 

\begin{proof} 
Consider two functions on the disc $B_{\tfrac{1}{2} p} (- \tfrac{1}{2} p)$: on the one hand, the restriction of $f_1$ itself; on the other hand, $g : B_{\tfrac{1}{2} p} (- \tfrac{1}{2} p) \to \C: z \mapsto f_1(z + 2 p)$. Each of these functions satisfies the same differential equation $\phi' = 1 + \phi^2$;  as remarked after Theorem \ref{add}, $f_1(\tfrac{3}{2} \: p) = -1$ so that $g(- \tfrac{1}{2} p)= f_1(- \tfrac{1}{2} p).$
It now follows by {\bf EU} and the Identity Theorem that $g$ agrees with $f_1$ on the disc $B_{\tfrac{1}{2} p} (- \tfrac{1}{2} p)$ and the proof is complete. 
\end{proof} 

\medbreak 

Of course, either by a parallel argument or by the fact that $f_1$ is odd, it is also true that if $|z - \tfrac{1}{2} p| < \tfrac{1}{2} p$ then $f_1(z - 2 p) = f_1(z)$. In particular, the behaviour of $f_1$ on the interval $(-p, 0)$ is repeated on the interval $(p, 2 p)$ and its behaviour on $(0, p)$ is repeated on $(-2 p, - p)$. More particularly still, $f (\pm \tfrac{1}{2} p) = \pm 1$ and $f(\pm \tfrac{3}{2} p) = \mp 1$. 

\medbreak 

We have reached a point at which it is appropriate to formulate precisely our inductive generation of the complex tangent function. Let us write $\PP$ for the set comprising precisely all integral multiples of $p$, writing $\PP_0$ for the even multiples of $p$ and $\PP_{\infty}$ for the odd multiples of $p$. When $n$ is any natural number, let us denote by $\E_n$ the open disc of radius $2^n p$ about $0$: thus, 
$$\E_n = 2^n \E = B_{2^n p} (0).$$ 

\medbreak 

Now, let $N$ be a positive integer and suppose that for each $1 \leqslant n \leqslant N$ we have constructed a holomorphic function 
$$f_n : \E_n \setminus \PP_{\infty} \to \C$$
so as to satisfy the following conditions: 
\medbreak 
\noindent 
$(\bullet)$ $f_n$ is a solution to {\bf IVP}; \\
$(\bullet)$ $f_n$ has simple poles precisely at the points of $\PP_{\infty}$ in $\E_n$; \\
$(\bullet)$ $f_n$ has simple zeros precisely at the points of $\PP_0$ in $\E_n$;  \\
$(\bullet)$ $f_n$ takes the value $\pm 1$ precisely at the points of $\tfrac{1}{2} \PP_{\infty}$ in $\E_n$;\\
$(\bullet)$ $f_n (z + 2^n p) = f_n (z)$ whenever $z + 2^{n - 2} p \in \E_{n - 2} \setminus \R$; \\
$(\bullet)$ $f_n$ agrees with $f_{n - 1}$ on $\E_{n - 1} \setminus \PP_{\infty}$; \\
$(\bullet)$ if $z \in \E_n \setminus (\PP_{\infty} \cup 2 \PP_{\infty})$ then 
$$f_n (z) = \frac{2 f_{n - 1} (\tfrac{1}{2} z)}{1 - f_{n - 1} (\tfrac{1}{2} z)^2}.$$

\medbreak 

The base step of this inductive construction is provided by the function $f_1$ that we fashioned from $f_0$ by duplication. With $f_N$ in hand, we construct  the next function 
$$f_{ N + 1} : \E_{N + 1} \setminus \PP_{\infty} \to \C$$
by reduplication as follows. We begin by defining $f_{N + 1}$ on $\E_{N + 1}\setminus (\PP_{\infty} \cup 2 \PP_{\infty})$ by the rule 
$$f_{N + 1} (z) = \frac{2 f_N (\tfrac{1}{2} z)}{1 - f_N (\tfrac{1}{2} z)^2}.$$ 
The resulting function is holomorphic on $\E_{N + 1}\setminus (\PP_{\infty} \cup 2 \PP_{\infty})$; let us inspect its behaviour on $2 \PP_{\infty}$ and on $\PP_{\infty}$. If here $z \in 2 \PP_{\infty}$ then $\tfrac{1}{2} z \in \PP_{\infty}$ so that (by induction) the numerator has a {\it simple} pole and the denominator has a {\it double} pole; consequently, the singularity of $f_{N + 1}$ (as defined thus far) is removable and when cured becomes a simple zero of the holomorphic function $f_{ N + 1} : \E_{N + 1} \setminus \PP_{\infty} \to \C$. If instead $z \in \PP_{\infty}$ then (by induction) the numerator is $\pm 2$ and the denominator is simply $0$; consequently, the singularity of $f_{N + 1}$ is a simple pole. This makes it clear that $f_{N + 1}$ has (simple) poles at the points of $\E_{N + 1} \cap \PP_{\infty}$ precisely. The point $z \in \E_{N + 1}$ is a zero of $f_{N + 1}$ exactly when $\tfrac{1}{2}$ is either a zero of $f_N$ (that is, $z \in 2 \PP_0$) or a pole of $f_N$ (that is, $z \in 2 \PP_{\infty}$). As $2 \PP_0 \cup 2 \PP_{\infty} = \PP_0$ this makes it clear that $f_{N + 1}$ has (simple) zeros at the points of $\E_{N + 1} \cap \PP_0$ precisely. Now let $z + 2^{N - 1} p \in \E_{N - 1} \setminus \R$: it follows that $\tfrac{1}{2} z + 2^{N - 2} p \in \E_{N - 2} \setminus \R$ and therefore (by induction) that 
$$f_{N + 1} (z + 2^{N - 1} p) = f_N ( \tfrac{1}{2} z + 2^{N - 2} p) = f_N (\tfrac{1}{2} z) = f_{N + 1} (z);$$
the Identity Theorem then ensures that the partial periodicity $f_{N + 1} (z + 2^{N - 1} p) = f_{N + 1} (z)$ actually holds whenever $z + 2^{N - 1} p \in \E_{N - 1} \setminus \PP_{\infty}$. Differentiation as before shows that the function $f_{N + 1} : \E_{N + 1} \setminus \PP_{\infty} \to \C$ satisfies {\bf IVP}; consequently, $f_{N + 1}$ agrees with  $f_N$ on $\E_N \setminus \PP_{\infty}$ by virtue of {\bf EU} and the Identity Theorem.  As $f_{N + 1}|_{\E_N \setminus \PP_{\infty}} = f_N$ takes values $\pm 1$ at the points of $\E_N \cap \tfrac{1}{2} \PP_{\infty}$ precisely, the partial periodicity just established guarantees that the odd function $f_{N + 1}$ takes the values $\pm 1$ alternately at the points of $\E_{N + 1} \cap \tfrac{1}{2} \PP_{\infty}$ precisely. This completes the inductive step in our construction. 
\medbreak 

We have arrived at our primary objective: the holomorphic function $f$ of Theorem \ref{1/2} extends into the plane as a meromorphic function for which we continue the symbol $f$. 

\medbreak 

\begin{theorem} \label{tangent}
The initial value problem {\bf IVP} admits a unique holomorphic solution 
$$f : \C \setminus \PP_{\infty} \to \C.$$
\end{theorem}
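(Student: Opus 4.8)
The plan is to glue the inductively constructed functions $f_n$ into a single function on $\bigcup_{n \geqslant 1} \E_n$, to observe that this union is all of $\C$, and then to obtain uniqueness from \textbf{EU} together with the Identity Theorem.

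First I would record the elementary fact that the discs $\E_n = B_{2^n p}(0)$ exhaust the plane, since $2^n p \to \infty$; hence every point of $\C \setminus \PP_{\infty}$ lies in $\E_n \setminus \PP_{\infty}$ for all sufficiently large $n$. Next, from the one-step compatibility clause $f_{n+1}|_{\E_n \setminus \PP_{\infty}} = f_n$ one deduces, by a trivial induction on $m - n$, that $f_m|_{\E_n \setminus \PP_{\infty}} = f_n$ whenever $m \geqslant n$. Consequently the prescription
$$f(z) := f_n(z) \qquad (z \in \E_n \setminus \PP_{\infty})$$
is an unambiguous definition of a function $f : \C \setminus \PP_{\infty} \to \C$, one that restricts to each $f_n$ and hence (tracing back through $f_0$ and the earlier extensions) to the original $f$ of Theorem \ref{1/2}.

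It then remains to verify that this $f$ is a holomorphic solution of \textbf{IVP} and that it is the only one. Holomorphy is a local matter: near any point of $\C \setminus \PP_{\infty}$ the function $f$ coincides with some $f_n$, which is holomorphic on $\E_n \setminus \PP_{\infty}$ by construction; for the same reason $f'(z) = 1 + f(z)^2$ at each such point, while $f(0) = f_1(0) = 0$, so $f$ solves \textbf{IVP}. For uniqueness, suppose $g : \C \setminus \PP_{\infty} \to \C$ is holomorphic with $g(0) = 0$ and $g' = 1 + g^2$. By \textbf{EU} there is exactly one holomorphic solution of \textbf{IVP} on some disc about $0$, so $g$ and $f$ agree there; and since $\PP_{\infty}$ is a discrete closed subset of $\C$, the complement $\C \setminus \PP_{\infty}$ is connected, so the Identity Theorem forces $g = f$ throughout $\C \setminus \PP_{\infty}$.

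I do not anticipate a serious obstacle: the substantive work was completed in the inductive construction of the $f_n$, and what is left is essentially the bookkeeping of the gluing. The two points that merit a moment's care are the chaining of the one-step relation $f_{n+1}|_{\E_n \setminus \PP_{\infty}} = f_n$ into the full statement $f_m|_{\E_n \setminus \PP_{\infty}} = f_n$ for all $m \geqslant n$, and the connectedness of $\C \setminus \PP_{\infty}$ needed to run the Identity Theorem globally in the uniqueness half. For good measure one may also note that the poles of $f$ at the points of $\PP_{\infty}$ are simple with residue $-1$, so that $f$ is in fact meromorphic on all of $\C$, as announced in the abstract.
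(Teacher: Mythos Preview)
Your proposal is correct and follows essentially the same route as the paper: collate the compatible sequence $(f_n)$ into a single function on $\bigcup_n (\E_n \setminus \PP_{\infty}) = \C \setminus \PP_{\infty}$, with well-definedness coming from the restriction property $f_{n+1}|_{\E_n \setminus \PP_{\infty}} = f_n$. You have in fact supplied more detail than the paper does---particularly for uniqueness, which the paper dismisses as ``plain'' while you spell out the appeal to \textbf{EU} near $0$ and the Identity Theorem on the connected set $\C \setminus \PP_{\infty}$.
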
 

\begin{proof} 
Existence is by mere collation of the sequence $(f_n : n \in \mathbb{N})$: when $z \in \C \setminus \PP$ define $f(z) : = f_n (z)$ for any $ n \in \mathbb{N}$ such that $n > |z|$; the choice of $n$ is immaterial, since each function in the sequence is the restriction of its successor. Uniqueness is plain. 
\end{proof} 

\medbreak 

This function $f$ is the complex tangent. Many of its familiar properties can be divined at once from the generating sequence $(f_n : n \in \mathbb{N})$. Of course, $f$ is both odd and `real'; further, it takes real values on $\R \setminus \PP_{\infty}$ precisely. Its pole set $\PP_{\infty}$ comprises precisely all odd multiples of the number $p$ specified in Theorem \ref{p}; moreover, each pole is simple and has residue $-1$. Its zeros are all simple and constitute the set $\PP_0$ comprising precisely all even multiples of $p$. It takes the values $\pm 1$ at precisely the half-odd-integer multiples of $p$: indeed, if $n \in \mathbb{N}$ then 
$$f ( (n + \tfrac{1}{2}) p ) = (- 1)^n.$$
\medbreak 

\medbreak 

The nonconstant meromorphic function $f$ has as many nonvalues as Picard allows. 

\medbreak 

\begin{theorem} \label{pm i}
The function $f$ has neither $i$ nor $-i$ as a value. 
\end{theorem}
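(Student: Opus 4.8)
The plan is to argue by contradiction, exploiting the rigidity supplied by \textbf{EU} together with the Identity Theorem, just as in the proofs leading up to this point. Suppose that $f(z_0) = i$ for some $z_0 \in \C \setminus \PP_{\infty}$ (note that a value can only be taken at a point of the domain, so $z_0$ is automatically not a pole). The crucial observation is that the constant function with value $i$ is itself a solution of the differential equation $\phi' = 1 + \phi^2$, since $1 + i^2 = 0$ equals the derivative of a constant; and it of course takes the value $i$ at $z_0$. Thus near $z_0$ we have two holomorphic solutions of one and the same initial value problem, namely $w' = 1 + w^2$ with $w(z_0) = i$: on the one hand $f$ itself, on the other the constant $i$.

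Next I would invoke \textbf{EU} for that initial value problem. Exactly as in the proof of Theorem \ref{1/2}, the right-hand side $F(z, w) = 1 + w^2$ carries no explicit $z$-dependence, so the hypotheses are met on a suitable bidisk about $(z_0, i)$ and the theorem furnishes a \emph{unique} holomorphic solution on some open disc about $z_0$. Both the constant function $i$ and $f$ are such solutions, so they must coincide on that disc; that is, $f$ is identically $i$ throughout a neighbourhood of $z_0$.

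Finally, the set $\C \setminus \PP_{\infty}$ is connected, being the plane with a discrete set removed, so the Identity Theorem promotes this local coincidence to the global equality $f \equiv i$ on all of $\C \setminus \PP_{\infty}$ --- which is absurd, since $f(0) = 0$. Hence $i$ is not a value of $f$; that $-i$ is not a value follows by the identical argument (or at once from the oddness of $f$, since $f(z_1) = -i$ would force $f(-z_1) = i$). I do not expect a genuine obstacle here: the only points deserving a word are the verification that \textbf{EU} does apply at the base point $z_0$ (immediate from the absence of $z$-dependence) and the connectedness of $\C \setminus \PP_{\infty}$, which is precisely what allows the Identity Theorem to act on the whole domain rather than merely on the component containing $z_0$.
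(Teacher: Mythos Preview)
Your proof is correct and is essentially identical to the paper's own argument: assume $f(z_0)=i$, note that the constant $i$ solves the same IVP at $z_0$, invoke \textbf{EU} to get local coincidence, and then the Identity Theorem on the connected domain $\C\setminus\PP_{\infty}$ to reach a contradiction. Your version is in fact slightly more explicit (naming the contradiction via $f(0)=0$ and offering oddness as a shortcut for $-i$), but the approach is the same.
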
 

\begin{proof} 
Deny: assume that $f$ takes the value $i$ at $a \in \C \setminus \PP_{\infty}$. {\bf EU} provides a neighbourhood of $a$ on which the initial value problem `$\phi ' = 1 + \phi^2$ and $\phi (a) = i$' has a unique solution: the constant function $i$ being an obvious solution, it follows that $f$ is constantly $i$ near $a$ and hence throughout $\C \setminus \PP_{\infty}$ by the Identity Theorem; this is absurd.  
\end{proof} 

\medbreak 

The function $f$ satisfies an addition theorem. 

\medbreak 

\begin{theorem} \label{addition}
If $a, \; z, \; a + z$ lie in $\C \setminus \PP_{\infty}$ then
$$f ( a + z) = \frac{f(a) + f(z)}{1 - f(a)f(z)} \: .$$ 
\end{theorem}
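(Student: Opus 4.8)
The plan is to mimic the proof of Theorem~\ref{add}, now that the addition formula is known to hold locally near the origin and the global object $f$ is available. Fix $a \in \C \setminus \PP_{\infty}$ and regard both sides of the claimed identity as meromorphic functions of $z$; the strategy is to show they satisfy the same first-order differential equation and agree at one point, then invoke {\bf EU} together with the Identity Theorem to conclude they agree wherever both are defined.

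First I would dispose of the degenerate case: if $1 - f(a) f(z)$ is identically zero in $z$ then $f$ would be constant (equal to $1/f(a)$), contradicting Theorem~\ref{tangent}; so the right-hand side $g(z) := \dfrac{f(a) + f(z)}{1 - f(a) f(z)}$ is a genuine meromorphic function of $z$ on $\C$, holomorphic off a discrete set. Next, pick a point $z_0$ near $0$ small enough that $a + z_0 \in \C \setminus \PP_{\infty}$, that $z_0 \in \C \setminus \PP_{\infty}$, and that $1 - f(a) f(z_0) \neq 0$; at such $z_0$ we have $g(z_0) = f(a + z_0)$ by continuity/analyticity, using that the formula of Theorem~\ref{add} extends to this range (the function $z \mapsto f(a+z)$ and $g$ are both meromorphic near $0$ and coincide on the small disc where Theorem~\ref{add} literally applies, hence wherever both are holomorphic near $0$). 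Then I would compute $g'(z)$ by the quotient rule, substituting $f' = 1 + f^2$ for both $f(z)$ and (as a constant) using $f(a)$; the algebra is exactly the one already performed for the duplication formula and collapses to $g'(z) = 1 + g(z)^2$. Likewise $z \mapsto f(a+z)$ solves $\phi' = 1 + \phi^2$. So on a neighbourhood of $z_0$ both $g$ and $z \mapsto f(a+z)$ are holomorphic solutions of the same initial value problem with the same value at $z_0$, whence they agree near $z_0$ by {\bf EU}; the Identity Theorem then forces $g(z) = f(a+z)$ at every $z$ for which all three of $a$, $z$, $a+z$ avoid $\PP_{\infty}$ and the denominator is nonzero.

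The one genuine gap to close is the possibility that $1 - f(a)f(z) = 0$ at some point $z$ with $a, z, a+z$ all in $\C \setminus \PP_{\infty}$: there the right-hand side has a pole, so I must check the left-hand side does too, i.e. that $a + z \in \PP_{\infty}$ in that case, which would make the hypothesis vacuous. Concretely, $f(a)f(z) = 1$; writing $u = f(a)$, $v = f(z)$ with $uv = 1$, the addition formula (already proved where valid) shows that as $z'$ approaches $z$ the quantity $f(a + z')$ blows up, so $a + z$ is a pole of $f$, hence lies in $\PP_{\infty}$, contrary to hypothesis. Thus the equation $1 - f(a)f(z) = 0$ never occurs under the stated hypotheses, and the identity holds throughout. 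I expect this pole-matching check to be the main obstacle, but it is really just the observation that the addition formula transports poles correctly; everything else is the same quotient-rule computation and Picard-plus-Identity-Theorem argument used repeatedly above.
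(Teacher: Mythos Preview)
Your overall plan is the paper's: fix $a$, check that both $z\mapsto f(a+z)$ and $z\mapsto g(z):=\dfrac{f(a)+f(z)}{1-f(a)f(z)}$ satisfy $\phi'=1+\phi^2$, match them at one point, and combine {\bf EU} with the Identity Theorem on the (connected) common domain; then deal separately with points where $1-f(a)f(z)=0$. One minor slip: your appeal to Theorem~\ref{add} to obtain the initial agreement $g(z_0)=f(a+z_0)$ is illegitimate for general $a$, since Theorem~\ref{add} requires $a\in\tfrac{1}{2}\E$. Simply take $z_0=0$, where $g(0)=f(a)=f(a+0)$ trivially; this is what the paper does.

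The substantive gap is in your handling of the case $f(a)f(z)=1$. You assert that $f(a+z')$ blows up as $z'\to z$, forcing $a+z\in\PP_\infty$. But that blow-up is guaranteed only if the numerator $f(a)+f(z')$ does \emph{not} also tend to $0$; if $f(a)+f(z)=0$ as well, the quotient may have a removable singularity and no pole of $f$ is forced. The paper handles this by running the argument the other way: since by hypothesis $a+z\notin\PP_\infty$, the value $f(a+z)$ is finite, so passing to the limit in the cross-multiplied identity $(1-f(a)f(z_n))\,f(a+z_n)=f(a)+f(z_n)$ yields $0=f(a)+f(z)$; combined with $f(a)f(z)=1$ this gives $f(z)^2=-1$, contradicting Theorem~\ref{pm i}. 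Your route can be repaired with the same ingredient: if $f(a)f(z)=1$ and $f(a)+f(z)=0$ then $f(a)^2=-1$, impossible by Theorem~\ref{pm i}, so the numerator is indeed nonzero and your blow-up argument then goes through. Either way, Theorem~\ref{pm i} is the missing piece and its role should be made explicit.
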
 

\begin{proof} 
Here, rather than use the result of Theorem \ref{add} it is perhaps easier to reproduce its proof. With $a \in \C \setminus \PP_{\infty}$ fixed, define functions $g_a$ and $G_a$ on their respective domains as follows. For $z$ not in the translate $\PP_{\infty} - a$ we define 
$$g_a (z) = f (a + z).$$ 
For $z$ neither in $\PP_{\infty}$ nor satisfying $f(a) f(z) = 1$ we define 
$$G_a (z) = \frac{f(a) + f(z)}{1 - f(a)f(z)} \: .$$
Note that each of these domains is connected, as is their intersection 
$$U_a = (\C \setminus \PP_{\infty}) \cap (\C \setminus (\PP_{\infty} - a)) \cap \{ z : f(a) f(z) \neq 1 \},$$
because the subset $ \{ z : f(a) f(z) = 1 \}$ of $\C \setminus \PP_{\infty}$ is discrete. By direct calculation each of the functions $g_a$ and $G_a$ satisfies the differential equation $\phi ' = 1 + \phi^2$ and the initial condition $\phi (0) = f(a)$. According to {\bf EU} they therefore agree on a neighbourhood of $0$; according to the Identity Theorem they therefore agree on their (connected) common domain $U_a$. 

This proves that the identity 
$$(1 - f(a) f(z)) f(a + z) = f(a) + f(z)$$
holds whenever $a, z, a + z \notin \PP_{\infty}$ and $f(a) f(z) \neq 1$. To complete the proof, we claim that if $a, z, a + z \notin \PP_{\infty}$ then the condition $f(a) f(z) \neq 1$ is in fact redundant, the hypothesis $f(a) f(z) = 1$ being untenable. To justify this claim, choose $r > 0$ so that $B_r (z) \subseteq (\C \setminus \PP_{\infty}) \cap (\C \setminus (\PP_{\infty} - a))$ and choose a sequence $(z_n : n \in \mathbb{N})$ in $B_r (z) \setminus \{z\}$ so that $z_n \to z$ and $f(a) f(z_n) \neq 1$; such a sequence exists, since otherwise the Identity Theorem would force constancy on $f$. Passage to the limit in the established relation 
$$(1 - f(a) f(z_n)) f(a + z_n) = f(a) + f(z_n)$$
then results in 
$$(1 - f(a) f(z)) f(a + z) = f(a) + f(z).$$
Here, by hypothesis, the left side vanishes, whence so does the right: the hypothesis $f(a) f(z) = 1$ then further forces $f(z)^2 = -1$ and thereby contradicts Theorem \ref{pm i}. 
\end{proof} 

\medbreak 

As noted in the proof, the assumption that $a + z, \; a$ and $z$ all lie in $\C \setminus \PP_{\infty}$ implies that the denominator in this addition formula is automatically nonzero. We leave as an exercise the task of deducing Theorem \ref{addition} from Theorem \ref{add} by application of the Identity Theorem. 

\medbreak 

The function $f$ is periodic, with identifiable periods. 

\medbreak 

\begin{theorem} \label{period}
The function $f : \C \setminus \PP_{\infty} \to \C$ has period-set precisely $\PP_0$. 
\end{theorem}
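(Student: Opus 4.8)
The plan is to establish both inclusions between $\PP_0$ and the set of periods of $f$ --- here $f$ is regarded as meromorphic on all of $\C$, with pole set $\PP_\infty$, so that the set of periods is evidently a subgroup of the additive group $\C$. Since $\PP_0 = 2p\,\mathbb{Z}$ is the cyclic subgroup generated by $2p$, proving that $\PP_0$ consists of periods reduces to showing that the single number $2p$ is one.

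To see that $2p$ is a period, I would first record that $f(2p) = 0$: this is immediate from the already-catalogued fact that the zero set of $f$ is exactly $\PP_0$, but it also follows from the addition theorem, since
\[ f(2p) = f\big(\tfrac{3}{2}p + \tfrac{1}{2}p\big) = \frac{f(\tfrac{3}{2}p) + f(\tfrac{1}{2}p)}{1 - f(\tfrac{3}{2}p)\,f(\tfrac{1}{2}p)} = \frac{-1 + 1}{1 - (-1)} = 0, \]
using the evaluations $f\big((n+\tfrac12)p\big) = (-1)^n$ recorded after Theorem~\ref{tangent}. Because $\PP_\infty + 2p = \PP_\infty$, a point $z$ lies in $\C\setminus\PP_\infty$ exactly when $z+2p$ does; so Theorem~\ref{addition} applied with $a = 2p$ gives
\[ f(z+2p) = \frac{f(z) + f(2p)}{1 - f(z)\,f(2p)} = f(z) \qquad (z \in \C\setminus\PP_\infty). \]
Hence $2p$, and therefore every element of $\PP_0$, is a period.

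For the converse, let $\omega$ be any period of $f$. The key observation is that translation by $\omega$ must preserve the pole set: the relation $f(z+\omega) = f(z)$ holds on a nonempty open set, hence (by the Identity Theorem) as an identity of meromorphic functions on $\C$, so poles are carried to poles and $\omega + \PP_\infty = \PP_\infty$. In particular $p + \omega \in \PP_\infty$, say $p + \omega = (2k+1)p$ with $k \in \mathbb{Z}$, and therefore $\omega = 2kp \in \PP_0$. (Alternatively: $\omega \notin \PP_\infty$, since otherwise the meromorphic identity would endow $f$ with a pole at $0$, contrary to $f(0)=0$; and then $f(\omega) = f(0+\omega) = f(0) = 0$ places $\omega$ in the zero set $\PP_0$.) Combining the two inclusions gives the asserted equality.

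The only step I expect to demand any care is the bookkeeping in the converse: one must be precise about what a period means for a function whose natural domain is $\C\setminus\PP_\infty$ rather than all of $\C$, and must check that a periodicity relation --- a priori asserted only where both $z$ and $z+\omega$ lie in that domain --- does upgrade to a genuine meromorphic identity, so that the pole set is indeed invariant under the translation. Everything else is a short computation with the addition theorem and the zeros and special values of $f$ already established.
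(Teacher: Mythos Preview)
Your proposal is correct and follows essentially the same route as the paper: both use the addition formula (Theorem~\ref{addition}) with $a=2p$ to show that $2p$ is a period, and both reduce the converse to the observation that a period $\omega$ must satisfy $f(\omega)=f(0)=0$, hence lie in the zero set $\PP_0$. Your extra care in checking that $\omega\notin\PP_\infty$ and that the periodicity upgrades to a meromorphic identity is welcome bookkeeping that the paper leaves implicit; your alternative argument via invariance of the pole set is a minor variant that the paper does not use, but it leads to the same conclusion just as quickly.
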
 

\begin{proof} 
Otherwise said, the periods of $f$ coincide with its zeros. In the one direction, there is little to do: if $a $ is a period of $f$ then in particular $f(a + 0) = f(0) = 0$ so that $a \in \PP_0$. In the opposite direction we offer two proofs, taking (nonzero) $a \in \PP_0$ as given; note that $a$ is an even multiple of $p$ which we may inductively reduce to $2 p$. (1) If $z \in \C \setminus \PP_{\infty}$ then also $a + z \in \C \setminus \PP_{\infty}$ so that $f(a + z) = f(z)$ by virtue of the addition formula in Theorem \ref{addition}.  (2) The identity $f (z + 2 p) = f (z)$ is satisfied by all $z$ in the open disc of radius $\tfrac{1}{2} \: p$ about $-\tfrac{1}{2} \: p$ by Theorem \ref{shift}; by the Identity Theorem, it is satisfied by all $z$ in $\C \setminus \PP_{\infty}$ (period) 
\end{proof} 

\medbreak 

We remark that more is true: if $z, w \in \C \setminus \PP_{\infty}$ then $f(w) = f(z)$ precisely when $w - z \in \PP_0$; we leave the verification of this as an exercise. 

\medbreak 

At this point, we discontinue our close analysis of the tangent function itself. Naturally, there is much more that can be said; we mention only a couple of points, without dwelling on details. 

\medbreak 

The Weierstrass `duplication trick' is formulated by Neville in essentially the following terms: if $g$ is meromorphic in an open disc about $0$ and if $g( 2 z)$ can be expressed rationally in terms of $g(z)$ and $g'(z)$, then $g$ exists as a meromorphic function in the whole plane. It is certainly possible to extend the local holomorphic solution of {\bf IVP} to a global meromorphic solution of {\bf IVP} by applying duplication in this fundamental form. We choose to incorporate further information (including the location of zeros and poles) into the duplication process; proceeding thus, it is not necessary to establish the relevant properties of the tangent function separately, after its construction. Also regarding the `duplication trick', it of course rests on the elementary rational function 
$$\C \setminus \{ \pm 1 \} \to \C : w \mapsto \frac{2 w}{1 - w^2}$$
which has $\pm i$ as its fixed points. This function has real output precisely for real input; the fact that the complex tangent $f$ shares this property is built into our construction. We could have accorded a greater r\^ole to this elementary function in our analysis.   

\medbreak 

With the tangent function $f : \C \setminus \PP_{\infty} \to \C \setminus \{ \pm i \}$ in place as a foundation, the entire structure of trigonometry arises as expected. 

\medbreak 

As $f$ has neither $i$ nor $-i$ as a value, the function $1 + f^2$ is nowhere zero; we are therefore at liberty to define functions $c$ (for cosine) and $s$ (for sine) as follows. When $z \in \C \setminus 2 \PP_{\infty}$ we define 
$$c(z) = \frac{1 - f(\tfrac{1}{2} z)^2}{1 + f(\tfrac{1}{2} z)^2} \: .$$
The resulting function $c: \C \setminus 2 \PP_{\infty} \to \C$ is holomorphic and has each of the points in $2 \PP_{\infty}$ as a removable singularity with value $-1$; the corresponding extension $c : \C \to \C$ is thus entire. Likewise, we define the entire function $s : \C \to \C$ by first declaring that if $z \in \C \setminus 2 \PP_{\infty}$ then 
$$s(z) = \frac{2 f(\tfrac{1}{2} z)}{1 + f(\tfrac{1}{2} z)^2} $$
and then filling in the removable singularities at the points of $2 \PP_{\infty}$ as zeros (which they are, the numerator having a simple pole but the denominator having a double pole). Each of $c$ and $s$ is `real' and so takes real values at real points, this being true for $f$ itself. As regards parity, the oddness of $f$ forces $c$ to be even and $s$ to be odd, As regards zeros: $c(z) = 0$ precisely when $z \in \PP_{\infty}$ (precisely when $f(\tfrac{1}{2} z) = \pm 1$); $s(z) = 0$ precisely when $z \in \PP_0$ (precisely when $\tfrac{1}{2} z$ is either a pole or a zero of $f$). As regards periodicity: $f$ has period $2 p$, so each of $c$ and $s$ has $4 p$ as a period; of course, here $4 p = 2 \pi$. As regards addition formulae: the one for $f$ itself implies that if $a, z, a + z \in \C \setminus \PP_{\infty}$ then 
\begin{eqnarray*}
c(a + z) & = & \frac{(1 - f(a)f(z))^2 - (f(a) + f(z))^2}{(1 - f(a)f(z))^2 + (f(a) + f(z))^2} \\ & = & \frac{1 + f(a)^2 f(z)^2 - f(a)^2  - f(z)^2 - 4 f(a) f(z) }{(1 + f(a)^2)(1 + f(z)^2)} \\ & = & \frac{(1 - f(a)^2) (1 - f(z)^2) - 2 f(a) 2 f(z)}{(1 + f(a)^2)(1 + f(z)^2)} \\ & = & c(a) c(z) - s(a) s(z);
\end{eqnarray*}
it follows by the Identity Theorem (or indeed plain continuity) that if $a, z \in \C$ then   
$$c(a + z) = c(a) c(z) - s(a) s(z)$$ 
and similarly (or otherwise) 
$$s(a + z) = s(a)c(z) + c(a)s(z).$$

\bigbreak 

Returning to the setting of initial value problems, from the satisfaction of {\bf IVP} by $f$ we may directly derive the system 
\begin{eqnarray*}
c' & = & -s, \: \; c(0) = 1,\\ s' & = & + c, \; \; s(0) = 0.
\end{eqnarray*}

\medbreak 

Finally, it is perhaps worth mentioning that the relationship between the resulting second-order equation $g'' + g = 0$ (satisfied by the cosine and the sine) and the first-order equation $f' = 1 + f^2$ (satisfied by the tangent) is a prime example of the relationships that exist between linear second-order equations and Riccati equations.

\bigbreak

\begin{center} 
{\small R}{\footnotesize EFERENCES}
\end{center} 
\medbreak 

[Goursat] E. Goursat, {\it Cours d'Analyse Math\'ematique}, Volume II, Gauthier-Villars (1905).  

\medbreak

[Hille] E. Hille, {\it Ordinary Differential Equations in the Complex Domain}, Wiley-Interscience (1976); Dover Publications (1997).

\medbreak 

[Neville] E.H. Neville, {\it Jacobian Elliptic Functions}, Oxford University Press (1944). 

\medbreak

\medbreak

\end{document}